\documentclass[12pt]{amsart}
\usepackage{geometry,blkarray}
\geometry{letterpaper,top=.75in, bottom=1in, left=1 true in, right=1 true in}
\linespread{1.1}
\newtheorem{theorem}{Theorem}[section]
\newtheorem*{maintheorem*}{Theorem 1.1}
\newtheorem{cor}[theorem]{Corollary}
\newtheorem{prop}[theorem]{Proposition}
\newtheorem{definition}[theorem]{Definition}
\newtheorem{lemma}[theorem]{Lemma}
\newcommand{\polya}{P\"olya}
\newcommand{\cc}{\overline c}
\newcommand{\rr}{\overline r}

%%%%%%%%%%%%%%%%%%%%%%%%%%%%%%%%%%%%%%%

\begin{document}

\bibliographystyle{plain}

\title{Invertibility of Submatrices of the Pascal Matrix and Birkhoff Interpolation}
\author{Scott Kersey}
\address{Georgia Southern University, USA}
\email{scott.kersey@gmail.com}
\keywords{Pascal matrix, Birkhoff interpolation, \polya{} system}
\subjclass{15A15, 41A10}

\maketitle

\begin{abstract}
The infinite upper triangular Pascal matrix is $T = [\binom{j}{i}]$ for $0\leq i,j$.
It is easy to see that any leading principle square submatrix is triangular with 
determinant $1$, hence invertible.
In this paper, we investigate the invertibility of arbitrary square submatrices $T_{r,c}$
comprised of rows $r=[r_0,\ldots,r_m]$ and columns $c=[c_0,\ldots,c_m]$ of $T$.
We show that $T_{r,c}$ 
is invertible iff $r \leq c$ (i.e., $r_i \leq c_i$ for $i=0, \ldots, m$), 
or equivalently, iff all diagonal entries are nonzero.
To prove this result we establish a connection between 
the invertibility of these submatrices and polynomial interpolation.
In particular, we apply the theory of Birkhoff interpolation and \polya{} systems.
\end{abstract}

Paper appears in: Journal of Mathematical Sciences: Advances and Applications, 41, 45--56 (2016).

%%%%%%%%%%%%%%%%%%%%%%%%%%%%%%%%%%%%%%%%%%%%%%%%%%%%%%%%%%%%%
\section{Introduction}

The infinite and order $n+1$ upper triangular Pascal matrices are
$$
T := \Big [ \binom{j}{i} \Big ] = 
\begin{bmatrix}
1 & 1 & 1 & 1 & \cdots  \\
0 & 1 & 2 & 3 & \cdots  \\
0 & 0 & 1 & 3 & \cdots  \\
0 & 0 & 0 & 1 & \cdots \\
\vdots & \vdots & \vdots & \vdots & \ddots  \\
\end{bmatrix}
\quad\text{and}\quad
%T_n = 
T_{n} := 
\begin{bmatrix}
1 & 1 & 1 & \cdots & \binom{n}{0} \\
0 & 1 & 2 & \cdots & \binom{n}{1} \\
0 & 0 & 1 & \cdots & \binom{n}{2} \\
\vdots  & \vdots  & \vdots  & \ddots  & \vdots \\
0 & 0 & 0 & \cdots & \binom{n}{n} 
\end{bmatrix},
$$
for $i,j=0,1, \ldots$, with $\binom{j}{i}:=0$ if $i>j$.
Note that $T_{n}$ is a finite square submatrix of $T$
determined by the first $n+1$ rows and columns.
Other (square) submatrices of $T$ are determined by selecting sequences
of the rows and columns of $T$.
These submatrices can be represented as
$$
T_{r,c} = 
\Big [ \binom{c_j}{r_i} \Big ] = 
\begin{bmatrix}
\binom{c_0}{r_0} & \binom{c_1}{r_0} & \cdots & \binom{c_m}{r_0} \\
\binom{c_0}{r_1} & \binom{c_1}{r_1} & \cdots & \binom{c_m}{r_1} \\
\vdots & \vdots & \ddots & \vdots \\
\binom{c_0}{r_m} & \binom{c_1}{r_m} & \cdots & \binom{c_m}{r_m}
\end{bmatrix}
$$
for some \emph {selections} $r = [r_0, \ldots, r_m]$ and 
$c = [c_0, \ldots, c_m]$ of the rows and 
columns of $T$, respectively.

The main issue considered in this paper is invertibility of these submatrices.
It is trivial to see that $T_n$ is invertible, with determinant $1$.
But what about other submatrices?
For example, 
$$
T_{[0,1,2],[1,2,5]} = \begin{bmatrix} 1&1&1\\1&2&5\\0&1&10 \end{bmatrix}
\quad \text{and} \quad
T_{[1,3,4],[1,2,5]} = \begin{bmatrix} 1&2&5\\0&0&10\\0&0&5 \end{bmatrix}.
$$
The second matrix, a triangular matrix with a zero on the main diagonal, clearly has
a zero determinant, hence is not invertible.
However, while the first matrix is invertible, it is not obvious to see.
In this paper, we will prove the following necessary and sufficient conditions
for invertibity of these submatrices:

\begin{theorem}
\label{t1}
Submatrices $T_{r,c}$ of the upper Pascal matrix are invertible iff 
the following equivalent conditions hold:
\begin{itemize}
\item $r \leq c$ (i.e., $r_i\leq c_i$ for all $i$).
\item There is no zero diagonal entry.
\end{itemize}
\end{theorem}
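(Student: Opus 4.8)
The plan is to convert the determinant question into one about polynomial interpolation in the Newton (binomial) basis and then settle the decisive direction by a lattice-path argument. First some harmless normalizations. If $r$ has a repeated entry (and likewise $x$), then $T(r,x)$ has two equal rows (columns) and is singular; permuting the entries of $r$ or of $x$ merely permutes rows or columns and changes $\det T(r,x)$ by a sign; so for the invertibility question we may assume $0\le r_0<r_1<\cdots<r_d$ and $0\le x_0<x_1<\cdots<x_d$. With $r,x$ increasing the two stated conditions coincide, since the $i$-th diagonal entry $\binom{x_i}{r_i}$ is nonzero exactly when $x_i\ge r_i$. The necessity of $r\le x$ is then immediate: if $r_i>x_i$ for some $i$ then $\binom{x_j}{r_k}=0$ whenever $j\le i\le k$ (as $x_j\le x_i<r_i\le r_k$), so $T(r,x)$ contains a zero block of size $(d-i+1)\times(i+1)$; since $(d-i+1)+(i+1)>d+1$, the matrix is singular.

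The content is the converse. Row $i$ of $T(r,x)$ is the vector of values at $x_0,\dots,x_d$ of the polynomial $p_i(t):=\binom{t}{r_i}$, so $T(r,x)$ is singular iff some nontrivial $f=\sum_i a_ip_i$ in the $(d{+}1)$-dimensional space $V:=\operatorname{span}\{p_0,\dots,p_d\}$ vanishes at every node; thus the claim is that interpolation from $V$ at $x_0,\dots,x_d$ is poised whenever $r\le x$. Since $\binom{t}{k}$ is built from the constant $1$ by $k$ successive indefinite summations (the discrete analogue of repeated integration), $V$ is a lacunary subspace of this P\'olya chain, and poisedness amounts to a Birkhoff-type condition: prescribe all forward differences $\Delta^kf(0)$ with $k\notin\{r_0,\dots,r_d\}$ to vanish, together with the values $f(x_j)$, and show that the combinatorial (P\'olya) condition $r\le x$ is not merely necessary but \emph{sufficient} for this incidence pattern. (A convenient reduction along this route: if $r_0=k>0$ then every $p_i$ is divisible by $t(t-1)\cdots(t-k+1)$, and since $x_j\ge r_j\ge k$ no node is a root of this factor, so dividing it out replaces $(r,x)$ by $(r-k,x-k)$ and reduces to the case $r_0=0$.)

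For the sufficiency I would use the Lindstr\"om--Gessel--Viennot lemma, which makes the statement transparent. Realize $\binom{x_j}{r_i}$ as the number of monotone north/east lattice paths from $A_{r_i}:=(-r_i,r_i)$ to $B_{x_j}:=(0,x_j)$; this count equals $\binom{x_j}{r_i}$ and is correctly $0$ when $r_i>x_j$. The sources lie on the anti-diagonal and increase with $i$, the sinks lie on a vertical line and increase with $j$, so the configuration is non-permutable (two such monotone paths whose sources and sinks occur in opposite orders must share a vertex), and LGV gives
\[
\det T(r,x)=\#\{\text{vertex-disjoint }(P_0,\dots,P_d):\ P_i\text{ a path }A_{r_i}\to B_{x_i}\}\ \ge\ 0,
\]
which incidentally reproves that the full Pascal matrix is totally positive. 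Hence $\det T(r,x)>0$ iff one such disjoint family exists, and when $r\le x$ an explicit one does: let $P_i$ run north from $(-r_i,r_i)$ to $(-r_i,x_i)$ and then east to $(0,x_i)$ (possible since $x_i\ge r_i$). The vertical legs sit at distinct abscissae $-r_i$ and the horizontal legs at distinct ordinates $x_i$, and a one-line case check — $x_{i'}>x_i$ rules out one type of crossing and $r_{i'}>r_i$ the other — shows no two $P_i$ share a vertex. Hence $\det T(r,x)\ge 1$.

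The main obstacle is exactly this sufficiency step: forcing global nonsingularity from the local condition $r\le x$. Naive approaches fail — root counting is too weak, since an $f\in V$ has degree $r_d$ and may carry far more than $d+1$ real roots; and a discrete Rolle argument loses control of the zeros of $\Delta f$ once the $x_j$ are not consecutive integers, so the obvious induction on $d$ escapes the integer setting. What rescues it is the rigidity provided either by the non-permutability of the lattice-path model together with the explicit disjoint family above, or — in the purely interpolation-theoretic treatment the paper develops — by the P\'olya-system structure of the binomial basis, through an Atkinson--Sharma-type sufficiency theorem for this incidence pattern (one node carrying all the difference conditions, plus simple value-nodes). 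Establishing that rigidity is the real work.
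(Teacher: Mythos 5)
Your proposal is correct, but it settles the sufficiency by a genuinely different route than the paper. Your necessity argument (a zero block in rows $i{:}d$, columns $0{:}i$ of total dimensions $(d-i+1)+(i+1)>d+1$) is the same observation as the paper's Lemma~2.2/Theorem~2.3, just phrased as a rank count instead of an induction on minors. For sufficiency, the paper identifies $T(r,x)=\Lambda_1(r)^TV(x)$, completes it with the conditions $\delta_0D^j$, $j\in\overline{x}$, to a square two-point Birkhoff system whose determinant equals $\det T(r,x)$ (Lemma~3.3), shows by a counting argument that the associated $2\times(n+1)$ incidence matrix satisfies the P\"olya condition exactly when $r\le x$, and then invokes the classical P\"olya--Whittaker theorem for two-point Birkhoff interpolation; you instead interpret $\binom{x_j}{r_i}$ as the number of monotone lattice paths $(-r_i,r_i)\to(0,x_j)$, check nonpermutability, apply Lindstr\"om--Gessel--Viennot so that $\det T(r,x)$ counts vertex-disjoint path families, and exhibit an explicit disjoint family (vertical leg at abscissa $-r_i$, horizontal leg at ordinate $x_i$) whenever $r\le x$; your disjointness check and the path-count identification are correct. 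What your route buys: it is self-contained modulo the LGV lemma, it avoids the two-point P\"olya/Whittaker theorem and the incidence-matrix machinery entirely, and it gives strictly more than invertibility --- every minor of the Pascal matrix is nonnegative (total nonnegativity) and $\det T(r,x)\ge 1$ when $r\le x$, with a combinatorial interpretation of the determinant. What the paper's route buys is precisely its stated aim: the equivalence of invertibility with a P\"olya-type condition and the connection of truncated Pascal matrices to Birkhoff interpolation, which your lattice-path proof does not illuminate (your closing remark correctly anticipates that an interpolation-theoretic treatment would instead hinge on a P\"olya-condition sufficiency theorem for the two-point incidence pattern, which is exactly what the paper does).
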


To our knowledge, this result has not appeared in the literature,
other than in a preliminary draft of this paper in \cite{K13b}.
The motivation for this work comes from our work on dual basis
in subspaces which first appeared in \cite{K13a}.
In that paper we use the theorem to demonstrate the linear independence
of certain dual basis functions.

As it turns out, the problem of invertibility of submatrices of the Pascal matrix
is connected to the theory of Birkhoff interpolation.
This goes back to a paper of G. \polya{} in \cite{P31},
as used by J.M. Whittaker in \cite{W35}, 
and later generalized by D. Ferguson in \cite{F69}.
The more general Birkhoff interpolation problem 
(with more than two interpolation points)
was presented by G. D. Birkhoff in \cite{B1906}.
In our work, we show that such submatrices of the Pascal matrix are invertible iff
a certain 2-point Birkhoff interpolation problem satisfies the \polya{} condition.

More recently, in \cite{P09} and \cite{P11}, Birkhoff
interpolation is generalized to lacunary interpolation,
which could be applied to the problem investigated in this paper.
While some of the techniques in their work are similar to ours,
we were not aware of their work during the preparation of this paper,
and so our results were obtained independently.
Finally, while the work in this paper originates to \cite{K13b},
much of the original material has been removed and some
content has been changed and/or corrected.
In particular, in the original draft we refer to the submatrices
as \emph{truncations} of the Pascal matrix.

The remainder of this paper is organized as follows:
\begin{itemize}
\item
In section (2), we show that our problem is equivalent to
a certain two-point polynomial interpolation problem.
\item
In Section (3), we summarize pertinent aspects of Birkhoff interpolation and
\polya{} systems.
\item
In section (4), we prove Theorem \ref{t1}.
\end{itemize}

%--------------------------------
\section{The Pascal Matrix and Polynomial Interpolation}

We begin this paper by establishing a connection between
(submatrices of) the Pascal matrix and polynomial interpolation.
Let $\Lambda_\alpha$ be a row vector 
$$
\Lambda_\alpha = \Big[\frac{\delta_\alpha}{0!}, \frac{\delta_\alpha D}{1!}, 
\ldots, \frac{\delta_\alpha D^i}{i!}, \ldots\Big]
$$
comprised of the functionals 
$$
\delta_{\alpha} D^i : f \mapsto f^{(i)}(\alpha),
$$
and let $V$ be the sequence of monomials
$$
V = \Big[1,(\cdot),(\cdot)^2, \ldots\Big].
$$
Note that $\Lambda_\alpha^TV$ is an infinite matrix.
We will be interested in the maps $\Lambda_0$ and $\Lambda_1$, when $\alpha =0$ and $\alpha=1$, respectively.
In particular, we have the following:

\begin{prop}
\label{p1}
The Pascal matrix $T  =  [\binom{j}{i}]$ is equivalent to 
$$\Lambda_1^TV = \Big [ \dfrac{\delta_1D^i (\cdot)^j}{i!}  \Big].$$
\end{prop}

\begin{proof}
Let $A := \Lambda_1^TV$.
We will show that $A = T$.
For $j\geq i$, 
$$
A(i,j) = \frac{\delta_1 D^i}{i!} x^j = 
\frac{1}{i!} \frac{j!}{(j-i)!} x^{j-i} \Big |_{x=1} 
= \binom{j}{i}.
$$
For $j< i$, 
$$
A(i,j) = \frac{\delta_1 }{i!} (D^i x^j) = \frac{\delta_1 }{i!} 0 = 0.
$$
Hence, $A=T$.
\end{proof}

%It follows that submatrices of $T$ are equivalent to 
%the action of selections of the functions in $\Lambda_1$
%on selections of the power basis.
%Formally, we have the following:

We can state a similar result for submatrices of the Pascal matrix.
Let $r = [r_0, \ldots, r_m]$ and $c = [c_0, \ldots, c_m]$ be selections
of the rows and and columns of $T$, respectively.
Let
$$
\Lambda_{\alpha,r} = \Big[\frac{\delta_\alpha D^{r_0}}{r_0!},  \ldots,
\frac{\delta_\alpha D^{r_m}}{r_m!}  \Big]
$$
and
$$
V_c = \Big[(\cdot)^{c_0}, \ldots, (\cdot)^{c_m}\Big].
$$
Then, by restricting the the result in
Proposition \ref{p1} to the row $r$ and columns $c$, we obtain the following:

\begin{prop}
\label{p2}
Let $r=[r_0, \ldots, r_m]$ and $c=[c_0, \ldots, c_m]$ be selections of the 
rows and columns of $T$.
Then, $$T_{r,c} = \Lambda_{1,r}^TV_c.$$
\end{prop}

Hence, to establish the invertibility of $T_{r,c}$, we can 
analyze $\Lambda_{1,r}^T V_c$.
To do so, we will establish a connection with two-point polynomial interpolation,
which we do by augmenting the point functionals at $x=1$ with functionals 
at $x=0$.
Let $\cc$ be the complement of $c$ in $[0:n]$, 
with $n := \max\{r_m,c_m\}$.
Let
$$
\Lambda_{r,\cc} := [\Lambda_{1,r}, \Lambda_{0,\cc}],
$$
and 
$$
V_{c,\cc} = [V_c,V_{\cc}]
= [(\cdot)^{c_0}, \ldots, (\cdot)^{c_d}, 
   (\cdot)^{\cc_0}, \ldots, (\cdot)^{\cc_{n-m}}].
$$
Note that $V_{c,\cc}$ is the power basis of degree $n$ with the terms rearranged.

\begin{theorem}
\label{t2}
$T_{r,c}$ is invertible iff $\Lambda_{r,\cc}^T V_{c,\cc}$ is invertible.
\end{theorem}

\begin{proof}
The interpolation matrix can be expressed in block form as follows:
$$
\Lambda_{r,\cc}^T V_{c,\cc}
=
[\Lambda_{1,r}, \Lambda_{0,\cc}]^T [V_c,V_{\cc}]
= \left[ \begin{array}{c|c} 
\Lambda_{1,r}^T V_c & \Lambda_{1,r}^T V_{\cc} \\
\hline
\Lambda_{0,\cc}^T V_c & \Lambda_{0,\cc}^T V_{\cc} 
\end{array} \right].
$$
By Proposition \ref{p2}, the upper left block is $T_{r,c}$.
The lower left and lower right blocks comprises terms of the form
$
\frac{1}{i!} \delta_0 D^j (\cdot)^i  = \frac{1}{i!} D^j (\cdot)^i \Big|_0,
$
which are $1$ when $i=j$ and zero otherwise.
But since $c$ and $\cc$ are complementary,
the lower left block is all zeros, while the lower right block is the identity.
Hence,
$$
\Lambda_{r,\cc}^T V_{c,\cc} =
\left[ \begin{array}{c|c} T_{r,c} & \Lambda_{1,r}^T V_{\cc} \\ 
  \hline 0 & I \\ \end{array} \right].
$$
Therefore, 
$\det(\Lambda_{r,\cc}^T V_{c,\cc}) = \det(T_{r,c})$,
and so 
$\Lambda_{r,\cc}^T V_{c,\cc}$
is invertible iff $T_{r,c}$ is invertible.
\end{proof}

For example, suppose $r = [0,2,4,7]$ and $c = [1,2,5,8]$ in $[0:8]$.
Then, 
\begin{align*}
\cc &= [0,3,4,6,7], \\
\Lambda_{r,\cc} &= 
\begin{bmatrix} \frac{\delta_1}{0!} & 
   \frac{\delta_1 D^2}{2!}& \frac{\delta_1 D^4}{4!}& \frac{\delta_1 D^7}{7!} &
  \frac{\delta_0}{0!}& \frac{\delta_0D^3}{3!} & 
  \frac{\delta_0D^4}{4!}& \frac{\delta_0D^6}{6!}& \frac{\delta_0D^7}{7!}\end{bmatrix}, \\
V_{[c,\cc]} &=  \begin{bmatrix} (\cdot)&(\cdot)^2&(\cdot)^4 &
   (\cdot)^8&1&(\cdot)^3&(\cdot)^4&(\cdot)^6&(\cdot)^7 \end{bmatrix}.
\end{align*}
And so, 
\[
\Lambda_{r,\cc}^T V_{c,\cc} = 
\begin{blockarray}{cccccccccc}
 &t&t^2&t^5&t^8&1&t^3&t^4&t^6&t^7\\
\begin{block}{c[cccc|ccccc]}
\frac{1}{0!}\delta_1&\binom10&\binom20& \binom50& \binom{8}{0} 
   & \binom{0}{0}& \binom{3}{0}& \binom{4}{0}& \binom{6}{0}& \binom{7}{0}\\
\frac{1}{2!}\delta_1D^2&\binom12&\binom22& \binom52& \binom{8}{2} 
   & \binom{0}{2}& \binom{3}{2}& \binom{4}{2}& \binom{6}{2}& \binom{7}{2}\\
\frac{1}{4!}\delta_1D^4&\binom14&\binom24& \binom54& \binom{8}{4} 
   & \binom{0}{4}& \binom{3}{4}& \binom{4}{4}& \binom{6}{4}& \binom{7}{4}\\
\frac{1}{7!}\delta_1D^7&\binom17&\binom27& \binom57& \binom{8}{7} 
   & \binom{0}{7}& \binom{3}{7}& \binom{4}{7}& \binom{6}{7}& \binom{7}{7}\\
\cline{2-10}
\frac{1}{0!}\delta_0 & 0&0&0&0&1&0&0&0&0 \\
\frac{1}{3!}\delta_0D^3 & 0&0&0&0&0&1&0&0&0 \\
\frac{1}{4!}\delta_0D^4 & 0&0&0&0&0&0&1&0&0 \\
\frac{1}{6!}\delta_0D^6 & 0&0&0&0&0&0&0&1&0 \\
\frac{1}{7!}\delta_0D^7 & 0&0&0&0&0&0&0&0&1\\ 
\end{block}
\end{blockarray}\].

%--------------------------------
\section{Birkhoff Interpolation and the \polya{} condition}

In the previous section, we established an equivalent condition
for the invertibility of the submatrix $T_{r,\cc}$ of the Pascal matrix.
This equivalent condition was expressed in terms of a kind
of generalized Vandermonde determined by the evaluation of certain
derivatives at $0$ and $1$.
As it turns out, this kind of interpolation problem is called 
(2-point) Birkhoff interpolation.
To solve problems like this, Ferguson (\cite{F69}) used \emph{incidence matrices}.

\begin{definition}
\label{d1}
An \emph{incidence matrix} $E$ for 2-point interpolation problems on 
$\Pi_n$ is a $2 \times (n+1)$ matrix
$$
E := \Big [ e_{ij} \Big ]
= \begin{bmatrix}
e_{00} & e_{01} & \cdots & e_{0m} \\
e_{10} & e_{11} & \cdots & e_{1m}
\end{bmatrix}
$$
of ones and zeros, with exactly $n+1$ ones.
The term $e_{ij}$ is $1$ when the interpolation problem includes $\delta_{x_i} D^j$.
%I.e., $e_{ij} = 1$ or $0$,
%and $\sum_{i=0}^1 \sum_{j=0}^n e_{ij} = n+1$,
%with $e_{ij}=1$ for functional 
\end{definition}

For example, the following are incidence matrices, each of dimension $2 \times 6$ with
exactly $6$ ones:
$$
\begin{bmatrix} 0 & 1 & 0 & 1 & 0 & 0\\ 1 & 0 & 1 & 1 & 0 & 1 \end{bmatrix},
\quad
\begin{bmatrix} 0 & 1 & 0 & 0 & 1 & 1\\ 0 & 1 & 1 & 0 & 1 & 0 \end{bmatrix}.
$$
These correspond to the functionals
$$[\delta_{x_0}, \delta_{x_0}^3, \delta_{x_1}, 
   \delta_{x_1}^2, \delta_{x_1}^3, \delta_{x_1}^5]$$
and
$$[\delta_{x_0}, \delta_{x_0}^4, \delta_{x_0}^5, 
   \delta_{x_1}, \delta_{x_1}^2, \delta_{x_1}^4],$$
respectively.
In this paper, we take $x_0=0$ and $x_1=1$.
As in \cite{F69}, we define $M_j$ to be the cumulative column sum
\begin{equation}
\label{e1}
M_j := \sum_{i=0}^1 \sum_{k=0}^j e_{ik}
\end{equation}
for $j=0:n$. 
In the previous examples, $M = [1,2,3,5,5,6]$ and $M = [0,2,3,3,5,6]$.
Note that $M_5=6$ is the total number of ones in both examples.
In general, $M_n = n+1$ for any incidence matrix.

The problem of 2-point Birkhoff interpolation was studied by \polya.
With respect to the incidence matrices, the following defintion is used.

\begin{definition} [\cite{F69}]
\label{d2}
The incidence matrix $E$ satisfies the \emph{\polya{} condition} if $M_j > j$ for $j=0:m$.
\end{definition}

In the above two examples, the first matrix is \polya, however the second is not because
$M_0=0< 1$ and $M_3=3<4$.
The following result, proved independently by \polya{} and Whittaker (as also described in \cite{F69}), 
gives necessary and sufficient conditions for correct interpolation.

\begin{theorem}[adapted from \cite{P31}, \cite{W35}]
\label{t3}
Let $E$ be a $2 \times (n+1)$ incidence matrix with entries $e_{ij}$, ones or zeros.
Let
$$\Lambda := [ \delta_{x_i}D^j : e_{ij} = 1]$$
and $ V^n = [1, (\cdot), \ldots, (\cdot)^n]$.  
Then, the system $\Lambda^T V^n$ is invertible iff $E$ satisfies the \polya{} condition.
\end{theorem}

%--------------------------------
\section{Invertibility of Submatrices of the Pascal Matrix}

Suppose $r = [r_0, \ldots, r_m]$ and $c = [c_0, \ldots, c_m]$ are selections
(increasing sequences) of the rows and columns of $T$, respectively.
Let $\cc$ be the complement of $c$ in $[0:n]$ with $n := \max\{r_m,c_m\}$.
We define 
$E_{r,\cc}$ to be the $2 \times (n+1)$ matrix with
$E_{r,\cc}(0,j) = 1$ if $r_i = 1$,
and
$E_{r,\cc}(1,j) = 1$ if $\cc_i = 1$,
with all other entries $0$.
For example, let $r = [0,1,4]$ and $c = [0,4,5]$.
We choose $n=5$.
Then, $\cc = [1,2,3]$ and
$$
E_{r,\cc} =  E_{[0,1,4],[1,2,3]} 
  = \begin{bmatrix} 1&1&0&0&1&0\\0&1&1&1&0&0 \end{bmatrix}.
$$
The following verifies that $E_{r,\cc}$ is an incidence matrix.

\begin{lemma}
\label{l1}
$E_{r,\cc}$ is an incidence matrix of dimension $2 \times (n+1)$.
\end{lemma}

\begin{proof}
Since,
$$\#\cc = (n+1) - \#c = (n+1) - (m+1) = n-m,$$
it follows that
$$
\sum_{i=0}^1 \sum_{j=0}^n e_{ij} = \#r + \#\cc = (m+1)+(n-m) = n+1.
$$
Hence, there are exactly $n+1$ ones (and $n+1$ zeros).
Following Definition \ref{d1}, $E_{r,\cc}$ is an incidence matrix.
\end{proof}

The next results are used to prove Theorem \ref{t1}.
The first is a Corollary of two previous theorems.

\begin{cor}
\label{c2}
The Pascal submatrix $T_{r,c}$ is invertible iff $E_{r,\cc}$ is \polya.
\end{cor}

\begin{proof}
By Theorem \ref{t2}, 
$T_{r,c}$ is invertible iff $\Lambda_{r,\cc}^T V_{c,\cc}$ is invertible,
and by Theorem \ref{t3}, $\Lambda_{r,\cc}^T V_{c,\cc}$ is invertible
iff $E_{r,\cc}$ is \polya.
Therefore, $T_{r,c}$ is invertible iff $E_{r,\cc}$ is \polya.
\end{proof}

\begin{lemma} 
\label{l2}
$r \leq c$ iff $\cc \leq \rr$.
\end{lemma}

\begin{proof}
Suppose $r \leq c$.
By the tautologies
\begin{align*}
\rr-\cc &= \rr\cap c = c\cap \overline{r\cap c} \\
\cc-\rr &= \cc\cap r = r\cap \overline{r\cap c},
\end{align*}
we see that
$\rr$ contains those elements in $c$ that are not in both $c$ and $r$, 
and $\cc$ contains those elements in $r$ that are not in both $c$ and $r$.
Since $r \leq c$, it follows that $\rr \geq \cc$.

For the reverse implication, assume that $\cc \leq \rr$,
and apply a similar proof as above, noting that
$\overline{\overline{r}} = r$ and $\overline{\overline{c}} = c$.
\end{proof}

\begin{lemma}
\label{l3}
Let $r$ and $x$ be selection vectors of the same length.
Then, $E_{r,x}$ is \polya{} iff $x \leq \rr$.
\end{lemma}

\begin{proof}
Suppose first that $x=\rr$.
We want to establish that $E_{r,\rr}$ is \polya.
Since $r$ and $\rr$ are complementary,
each column of the incidence matrix $E_{r,\rr}$ has exactly one ``1" and one ``0".
Hence, $M_0 = 1$, $M_1 = 1+1=2$, etc.
In particular $M_j = j+1$ for $j=0, \ldots, n$,
implying that $E_{r,\rr}$ is \polya.

Now, suppose that $x \leq \rr$.
Let $M^x := [M_0^x, \ldots, M_n^x]$ and $M^{\rr} := [M_0^{\rr}, \ldots, M_n^{\rr}]$ 
be the cumulative sums defined in (\ref{e1}).
Since $x \leq \rr$, it follows that 
$$M^x_j \geq M^{\rr}_j = j+1.$$
Hence, $E_{r,x}$ is \polya.

Now suppose that $x \not\leq \rr$.
Then, $x_i > \rr_i$ for some $i$.
Suppose that $j$ is the first such occurrence.
Then, $M_{j-1}^x = j = M_j^x$.
Hence, we don't have $M_j^x > j$ for all $j$, 
implying that $E_{r,x}$ is not \polya.
By contraposition, $E_{r,x}$ \polya{} implies $x \leq \rr$.

On combining these last two results,
we have $E_{r,x}$ \polya{} iff $x \leq \rr$.
\end{proof}

We now establish our main result.

\begin{maintheorem*}
Submatrices $T_{r,c}$ of the upper Pascal matrix are invertible iff 
the following equivalent conditions hold:
\begin{itemize}
\item $r \leq c$ (i.e., $r_i\leq c_i$ for all $i$).
\item There is no zero diagonal entry.
\end{itemize}
\end{maintheorem*}

\begin{proof}
For the first part, 
$r \leq c$ iff $\cc \leq \rr$ by Lemma \ref{l2},
and $\cc \leq \rr$ iff $E_{r,\cc}$ is \polya{} by Lemma \ref{l3}. 
Hence, $E_{r,\cc}$ is  \polya{} iff $r \leq c$.
By Corollary \ref{c2}, this holds iff $T_{r,c}$ is invertible.
This establishes the first part.

For the second part,
note that the diagonal elements of $T_{r,c}$ 
are of the form $\binom{c_k}{r_k}$,
which are zero iff $r_k > c_k$.
Hence, a diagonal element is zero iff $r \not\leq c$.
By the first result, this occurs iff $T_{r,c}$ is not invertible.
By contraposition,
$T_{r,c}$ is invertible iff there is no zero on the diagonal.
\end{proof}

%%%%%%%%%%%%%%%%%%%%%%%%%%%%%%%%%%%%%%%%%%%%%%%%%%%%%%%%%%%%%


\begin{thebibliography}{1}
\bibitem{B1906}
G. D. Birkhoff.
\newblock General mean value and remainder theorems with applications to mechanical 
differentiation and integration.
\newblock {\em Trans. Am. Math. Soc.} 1, 107--136, 1906.

\bibitem{F69}
D. Ferguson.
\newblock The Question of Uniqueness for G. D. Birkhoff Interpolation Problems.
\newblock {\em J. Approx. Th.} 2, 1--28, 1969.

\bibitem{K13a}
S. Kersey.
\newblock Dual Basis Functions in Subspaces of Inner Product Spaces.
\newblock {\em Applied Mathematics and Computation} 219, 10012--10024, 2013.

\bibitem{K13b}
S. Kersey.
\newblock Invertibility of Submatrices of Pascal's Matrix and Birkhoff Interpolation (preprint).
\newblock {\em arXiv:1303.6159}, 2013.

\bibitem{P09}
F. Palacios-Qui$\tilde{\text{n}}$onero, P. Rubi\'o-D\'iaz,
J. D\'iaz-Barrero, J. Rossell.
\newblock
Order regularity of two-node Birkhoff Interpolation with Lacunary Polynomials.
\newblock
{\em Applied Mathematics Letters} 22, 386--389 (2009).

\bibitem{P11}
F. Palacios-Qui$\tilde{\text{n}}$onero, P. Rubi\'o-D\'iaz,
J. D\'iaz-Barrero, J. Rossell.
\newblock
Order of regularity for Birkhoff Interpolation with Lacunary Polynomials.
\newblock
{\em Mathematica Aeterna} 1 (3), 129--135 (2011).

\bibitem{P31}
G. \polya.
\newblock
Bemerkungen zur Interpolation und zur N\"aherungstheorie der Balkenbiegung.
\newblock {\em Z. Angew. Math. Mech.} 11, 4445--449, 1931.

\bibitem{W35}
J. M. Whittaker.
\newblock Interpolatory Function Theory.
\newblock {\em Cambridge University Press (London)}, 1935.

\end{thebibliography}
\end{document}